\newtheorem{thm}{Theorem}
\newtheorem{lemma}{Lemma}[section]
\newtheorem{prop}[lemma]{Proposition}
\newtheorem{remark}[lemma]{Remark}
\newcommand \nc{\newcommand}
\newcommand{\ben}{\begin{eqnarray}}
\newcommand{\een}{\end{eqnarray}}
\newcommand{\beno}{\begin{eqnarray*}}
\newcommand{\eeno}{\end{eqnarray*}}
\makeatletter \@addtoreset{equation}{section} \makeatother
\nc{\ba}{\begin{array}}\nc{\ea}{\end{array}}
\nc{\be}{\begin{eqnarray}}\nc{\ee}{\end{eqnarray}}
\nc{\beq}{\begin{equation}}\nc{\eeq}{\end{equation}}
\nc{\bex}{\begin{eqnarray*}}\nc{\eex}{\end{eqnarray*}}
\nc{\btm}{\begin{theorem}} \nc{\etm}{\end{theorem}}
\nc{\blm}{\begin{lemma}} \nc{\elm}{\end{lemma}}
\nc{\va}{\varphi}
\nc{\ve}{\varepsilon}
\def\ds{\displaystyle}
 \newcommand{\n}{{\bf n}}
\newcommand{\R}{\mathbb {R}}
\def\({\left(\begin{array}{cccccc}}
\def\){\end{array}\right)}
\def\bes{\begin{eqnarray}}
\def\ees{\end{eqnarray}}
\let\th\theta
\let\epsilon\ve
\title[Wave model for liquid-crystals]{Singularity formation for the general Poiseuille flow of nematic liquid crystals }
\author{Geng Chen$^1$}
\address[G. Chen] {Department of Mathematics, University of Kansas, Lawrence, KS 66045, U.S.A.,}
\email{\tt gengchen@ku.edu}
\author{Majed Sofiani$^{2*}$}
\address[M. Sofiani]{Department of Mathematics, University of Kansas, Lawrence, KS 66045, U.S.A.}
\email{\tt sofiani@ku.edu}
\date{\today}
\begin{document}

\maketitle
$^1$ Department of Mathematics, University of Kansas, Lawrence, KS 66045, U.S.A.\\
Email: gengchen@ku.edu,\\
$^2$ Department of Mathematics, University of Kansas, Lawrence, KS 66045, U.S.A. Email: sofiani@ku.edu.

\begin{abstract}
We consider the Poiseuille flow of nematic liquid crystals via the full Ericksen-Leslie model. The model is described by a coupled system consisting of a heat equation and a quasilinear wave equation. In this paper, we will construct an example with a finite time cusp singularity due to the quasilinearity of the wave equation, extended from an earlier result on a special case.
\end{abstract}

{\em Dedicated to Professor Tong Zhang in the occasion of his 90-th birthday.}

\section{Introduction:}

The state of a nematic liquid crystal is  characterized by its velocity field ${\bf u}$ for the flow and its director field  ${\bf n}\in \mathbb S^2$ for the alignment of the rod-like feature.  These two characteristics interact with each other so that any distortion of the director $\n$ causes 
a motion ${\bf u}$  and, likewise,  any flow ${\bf u}$ affects the alignment $\n$.   
One famous model on nematic liquid crystal is the Ericksen-Leslie model for nematics that  was first proposed by Ericksen \cite{ericksen62} and Leslie \cite{leslie68}  in the 1960's. 

In this paper, we consider the Poiseulle flow via the full Ericksen-Leslie model, when
${\bf u}$ and ${\bf n}$ take the form
\[{\bf u}(x,t)= (0,0, u(x,t))^T \;\mbox{ and }\;{\bf n}(x,t)=(\sin\theta(x,t),0,\cos\theta(x,t))^T,\]
where $(x,t)\in {\mathbb R}\times {\mathbb R}^+$ and the motion ${\bf u}$ is along the $z$-axis and  the director ${\bf n}$ lies in the $(x,z)$-plane with angle $\theta$ made from the $z$-axis.  Then the Ericksen-Leslie system can be written as  
\begin{align}\label{sysf}
    \ds u_t&=\left(g(\theta)u_x+h(\theta)\theta_t\right)_x,\\\label{sysw}
\theta_{tt}+\gamma_1\theta_t&=c(\theta)\big(c(\theta)\theta_x\big)_x-h(\theta)u_x
\end{align}
where the $C^\infty$ functions $c,$ $g,$ and $h$ are explicitly given by  
\begin{align}\label{fgh}\begin{split}
 g(\theta):=&\alpha_1\sin^2\theta\cos^2\theta+\frac{\alpha_5-\alpha_2}{2}\sin^2\theta+\frac{\alpha_3+\alpha_6}{2}\cos^2\theta+\frac{\alpha_4}{2},\\
  h(\theta):=&\alpha_3\cos^2\theta-\alpha_2\sin^2\theta=\frac{\gamma_1+\gamma_2\cos(2\theta) }{2},\\
  c^2(\theta):=&K_1\cos^2\theta+K_3\sin^2\theta.
 \end{split}
 \end{align}
Here, $K_1$, $K_3$ are positive elastic constants in the Oseen-Frank energy. 
The material coefficients $\gamma_1$ and $\gamma_2$ reflect the molecular shape and the slippery part between fluid and particles.
The coefficients $\alpha_i's$ and coefficients $\gamma_1$, $\gamma_2$ satisfy the following physical relations:
\begin{align}\label{a2g}
\gamma_1 =\alpha_3-\alpha_2,\quad \gamma_2 =\alpha_6 -\alpha_5,\quad \alpha_2+ \alpha_3 =\alpha_6-\alpha_5. 
\end{align}
The first two relations are compatibility conditions, while the third relation is called Parodi's relation, derived from Onsager reciprocal relations expressing the equality of certain relations between flows and forces in thermodynamic systems out of equilibrium (cf. \cite{Parodi70}). 
The coefficients also satisfy the following empirical relations (p.13, \cite{Les}) 
\begin{align}\label{alphas}
&\alpha_4>0,\quad 2\alpha_1+3\alpha_4+2\alpha_5+2\alpha_6>0,\quad \gamma_1=\alpha_3-\alpha_2>0,\\
&  2\alpha_4+\alpha_5+\alpha_6>0,\quad 4\gamma_1(2\alpha_4+\alpha_5+\alpha_6)>(\alpha_2+\alpha_3+\gamma_2)^2\notag.
\end{align} 
A detailed derivation of the Ericksen-Leslie system for Poiseulle flows can be found in \cite{CHL1}, where without loss of generality, we choose  density $\rho$ to be $1$, and the inertial coefficient $\nu$ of the director ${\bf n}$ to be $1$. Furthermore, for simplicity, the constant $a$ in the model in \cite{CHL1}, which is the gradient of pressure along the flow direction, is set to be zero.



\medskip

Due to the quasilinearity of the wave equation on $\theta$,  finite time gradient blowup might happen even when the initial data are smooth. In \cite{CHL1}, Chen, Huang and Liu established an example showing such kind of singularity formation phenomena, for a special case of \eqref{sysf}-\eqref{sysw} when $g=h=1$ and $\gamma_1=2$. 

The construction of the singularity formation example relies on the framework in \cite{GHZ} by Glassy-Hunter-Zheng on the variational wave equation
\beq\label{vwth}  \theta_{tt}=c(\theta)(c(\theta)\theta_x)_x.
\eeq
The global well-posedness theories of H\"older continuous solutions for variational wave equations and systems related to nematic liquid crystals have been intensively studied in the last two decades \cites{BC2015,BC,BCZ,BH,BZ,CCD,CZZ12,CZ12,HR,ZZ03,ZZ10,ZZ11
}.

The proof of finite time singularity formation in \cite{CHL1} for the special case of \eqref{sysf}-\eqref{sysw} also relies on a crucial estimate on the $L^\infty$ bound of a new function $J$, where in the general case, 
\beq\label{Jdef}
J=u_x+\frac{h}{g} \theta_t.
\eeq
We note that functions $u_x$ and $\theta_t$ both blow up when singularity forms, however, their combination $J$ will be proved uniformly bounded. This estimate, first obtained in \cite{CHL1} for the special case (when $g=h=1$ and $\gamma_1=2$), is crucial for both singularity formation and global existence of H\"older continuous solution for the system \eqref{sysf}-\eqref{sysw}. 

Using \eqref{Jdef},  the wave equation \eqref{sysw} can be written as
\begin{align}\label{waveAx}
    \theta_{tt}+\big[\gamma_1-\frac{h^2(\theta)}{g(\theta)}\big]\theta_t&=c(\theta)(c(\theta)\theta_x)_x-h(\theta)J.
\end{align}
From \eqref{fgh}-\eqref{alphas}, we have the following bounds for $g$, $h$ and $c$
\begin{align}\nonumber
    g_L&\leq g(\th)\leq g_U,\\\label{ghcbounds}
    h_L&\leq h(\th)\leq h_U,\\\nonumber
    C_L&\leq c(\th)\leq C_U
\end{align}
where $g_L,g_U,h_L,h_U,C_L$ and $C_U$ are constants such that $g_L,g_U,h_U, C_L$ and $C_U$ are strictly positive.
Furthermore,  physical laws in \eqref{a2g} and \eqref{alphas} give that 
\beq\label{crucial}
\min\{\gamma_1-\frac{h^2(\theta)}{g(\theta)},g(\theta)\}>\overline C.
\eeq
for some positive constant $\overline C$, where the proof can be found in \cite{CHL1}.

In this paper, we will first find a bound on $J$ in terms of the initial energy. So \eqref{waveAx} is  a damped variational wave equation adding a uniform bounded source term $hJ$. Then we can prove  the singularity formation of cusp singularity using the methods in \cites{GHZ, CZ12,CHL1}. Note the source term $h u_x$ in the original equation \eqref{sysw} may be unbounded.


In another companion paper \cite{CLM}, we will show that $J$ is bounded under the norm of $L^2\cap L^\infty\cap C^\alpha$ for some positive constant $\alpha$, even for weak solutions including singularities. This is one of the key estimates for the global existence proof. 

Except showing one example forming the cusp singularity, another motivation of this paper is to introduce the major mathematical idea why we can get better regularity on $J$ than $u_x$ or $\theta_t$, in the general case. For smooth solutions, we can explain the idea in a relatively easier manner than for weak solutions.

Now, let's expain the main difficulty in controlling $J$ for the general case comparing to the special case. For the special case considered in \cite{CHL1}  (when $g=h=1$ and $\gamma_1=2$), \eqref{sysf} is 
\[
u_t=u_{xx}+\theta_{tx},
\]
so $u$ can be solved directly by $\theta_{tx}$.
However, this is not true for the general case.
The main difficulty we need to overcome is the varying coefficient $g(\theta)$ in the heat equation \eqref{sysf}. Although $g(\theta)$ is strictly positive and uniformly bounded, it is only H\"older continuous on $x$ and $t$ at the blowup. Therefore, the derivatives of $g(\theta)$ on $x$ and $t$ blow up when singularity forms. This creates a lot of essential problems for finding a uniform bound on $J$, since we need to use both heat and wave equations to bound $J$. One of our key ideas is to consider the potential $A$ of $J$, with $A_x=J$.

Before we state the main result we define the following function  $\phi(x)$ with $x\in \mathbb{R}$, that is used to design the initial data. Take $\phi\in C^1$ with
\begin{align}
    \phi(0)=0\,\,\,\,\,\,\,\,\text{and}\,\,\,\,\,\,\, \phi(a)=0\,\,\,\text{for}\,\,\,\, a\notin(-1,1),
\end{align}
\begin{align}
-\phi'(0)>\max\{\frac{16C_U\|\gamma_1-\frac{h^2}{g}\|_{L^\infty}}{ c'(\theta*)C_L\ln 2},\frac{\exp\big({\|{\gamma_1-\frac{h^2}{g}\|_{L^\infty}}}\big)}{C_L}\}\,\,\,\text{and}\,\,\,\,|\phi'(x)|\leq C_2,
\end{align}
and
\begin{equation}
\int_{-1}^{1}(\phi')^2(a)\,da<k_0,
\end{equation}
where $\theta^*$ is a constant such that $c'(\theta^*)>0$ and $C_2, k_0$ are some positive constants.\\

Here is the main singularity formation result. 

\begin{thm}\label{thm}
Let the initial data be
\begin{align}
\theta(x,0)=\theta_0(x):=\theta^*+\epsilon\phi(\frac{x}{\epsilon}),\,\,\,\,\, \theta_t(x,0)=\theta_1(x):=(-c(\theta_0(x))+\epsilon)\theta'_0(x),
\end{align}
and
\[u(x,0)=u_0(x):= \begin{cases} 
      0, & x\in(-\infty,-\epsilon) \\
      \int_{-\epsilon}^{x}\frac{h}{g}c(\theta_0(a))\theta'_0(a)\,da, &  x\in[-\epsilon,\epsilon] \\
      \chi(x), & x\in(\epsilon,\epsilon+2)\\
      0, & x\in(\epsilon+2,\infty) 
   \end{cases}
\]
where $\theta^*$ is a constant such that $c'(\theta^*)>0$, and $\chi(x)$ is a $C^1$ function satisfying
\begin{equation}
|\chi'(x)|\leq \frac{3}{2}\|\frac{h}{g}\|_{L^\infty} C_UC_2\,\ve,
\end{equation}
and such that $u_0(x)$ is a $C^1$ function.
 Then there exists a sufficiently small positive choice of the parameter $\epsilon$ such that the solution $(\th,u)$  of \eqref{sysf}-\eqref{sysw} is $C^1$ only up to a finite time. More precisely, the solution is continuously differentiable up to some time before \[T=\min\left\{\frac{2\ln2}{\|\gamma_1-\frac{h^2}{g}\|_{L^\infty}},1\right\}.\]
\end{thm}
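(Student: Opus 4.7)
The plan is to adapt the Glassey-Hunter-Zheng characteristic analysis to the reformulated wave equation \eqref{waveAx}, treating $-h(\theta)J$ as a uniformly bounded source that replaces the potentially unbounded term $-h(\theta)u_x$ in \eqref{sysw}. The critical new ingredient beyond \cite{CHL1} is an a priori $L^\infty$ bound on $J$ along the smooth lifespan, depending only on the initial data. Following the introduction, I would work with the potential $A(x,t)$ satisfying $A_x = J$, a device that shifts one spatial derivative off the merely H\"older continuous coefficient $g(\theta)$. Combining an evolution equation for $A$ derived from \eqref{sysf}-\eqref{sysw} with the basic energy identity for \eqref{waveAx} (which, by the coercivity \eqref{crucial}, controls $\int(\theta_t^2+c^2\theta_x^2)\,dx$), one should close a Gronwall estimate giving $\|J\|_{L^\infty}\le M_0$ with $M_0$ depending only on the initial data. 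The carefully designed initial conditions make $\|J(\cdot,0)\|_{L^\infty}=O(\varepsilon)$: on $[-\varepsilon,\varepsilon]$ the explicit formula for $u_0$ yields $J(x,0)=(h/g)\,\varepsilon\,\theta_0'(x)$, on $[\varepsilon,\varepsilon+2]$ one has $\theta_1=0$ so $J=\chi'$ is controlled by the hypothesis on $\chi$, and $J\equiv 0$ outside.

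Given the $J$-bound, I would run the standard Riemann-invariant decomposition. Setting $R=\theta_t+c(\theta)\theta_x$ and $S=\theta_t-c(\theta)\theta_x$ and using \eqref{waveAx} gives
\begin{align*}
R_t - c R_x &= \frac{c'(\theta)}{4c(\theta)}(R^2 - S^2) - \frac{1}{2}\Big[\gamma_1 - \frac{h^2}{g}\Big](R+S) - h\,J,\\
S_t + c S_x &= \frac{c'(\theta)}{4c(\theta)}(S^2 - R^2) - \frac{1}{2}\Big[\gamma_1 - \frac{h^2}{g}\Big](R+S) - h\,J.
\end{align*}
From $\theta_1 = (-c(\theta_0)+\varepsilon)\theta_0'$ one reads $R(\cdot,0) = \varepsilon\,\theta_0'(x) = O(\varepsilon)$ uniformly, while $S(0,0) = (-2c(\theta^*)+\varepsilon)\phi'(0) > 0$ can be made as large as needed by the lower bound on $-\phi'(0)$. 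Along the forward characteristic $\dot x = c(\theta)$ through $x=0$, a bootstrap using the $J$-bound, the explicit lower bounds on $-\phi'(0)$, and a Gronwall estimate keep $\theta$ close to $\theta^*$ (in particular $c'(\theta)\ge c'(\theta^*)/2$ and $c(\theta)\le C_U$) and $R$ of order $\varepsilon$ on $[0,T]$. The $S$-equation along the characteristic then reduces to a Riccati inequality of the form
\[
\frac{dS}{dt} \ge \frac{c'(\theta^*)}{8C_U}\,S^2 - \frac{\|\gamma_1 - \frac{h^2}{g}\|_{L^\infty}}{2}\,S - M_1,
\]
with $M_1 = M_1(M_0,\|h\|_\infty,\varepsilon)$. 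The two explicit lower bounds on $-\phi'(0)$ are calibrated precisely so that Riccati comparison forces $S\to +\infty$ strictly before $T = \min\{2\ln 2/\|\gamma_1-\frac{h^2}{g}\|_{L^\infty},\,1\}$. Since $R$ stays bounded while $S$ blows up, both $\theta_x$ and $\theta_t$ diverge; the identity $u_x = J - (h/g)\theta_t$ together with $\|J\|_{L^\infty}\le M_0$ then forces $u_x$ to diverge as well, so $(\theta,u)$ leaves $C^1$ before time $T$.

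The principal obstacle I foresee is the first step. In the special case $g\equiv 1$ of \cite{CHL1}, \eqref{sysf} reduces to $u_t = u_{xx} + \theta_{tx}$ and $u_x$ (hence $J$) can be extracted directly by inverting the one-dimensional heat operator. For variable $g(\theta)$ this direct route is blocked, precisely because $\partial_x g(\theta)$ and $\partial_t g(\theta)$ blow up at the very time we are trying to control. The potential $A$ is the correct device to circumvent this loss of regularity, but writing down a tractable evolution for $A$ and closing the Gronwall inequality using only energy-type quantities requires a careful interplay between the parabolic \eqref{sysf} and the hyperbolic \eqref{waveAx}. Once that step is in hand, every subsequent step is a controlled modification of the Glassey-Hunter-Zheng/\cite{CHL1} Riccati argument.
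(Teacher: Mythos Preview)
Your overall architecture matches the paper's: bound $J$ via the potential $A$, then run a Riemann-invariant Riccati argument along the forward characteristic through the origin. Two points of execution differ from the paper in ways that matter.

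\textbf{The $J$-bound.} The paper does not close this by a Gronwall inequality. It derives a variable-coefficient parabolic equation for $A$,
\[
A_t-g(\theta)A_{xx}+\gamma_1 A=g'(\theta)\theta_x\,J+F_1+F_2,
\]
with $\|F_1\|_{L^\infty}$ and $\|F_2\|_{L^2}$ controlled by the initial energy, represents $A$ by Duhamel's formula using the fundamental solution $\Gamma$ of the operator $\partial_t-g(\theta)\partial_{xx}+\gamma_1$ (Friedman's pointwise kernel estimates for H\"older coefficients), and then differentiates in $x$ to express $J$. The dangerous term $\int_0^t\!\int_\R\Gamma_x\,g'\theta_\xi J$ is bounded by $Ct^{1/4}\mathcal E(0)^{1/2}\|J\|_{L^\infty_{t,x}}$, so smallness of the initial energy (i.e.\ of $\varepsilon$) allows it to be absorbed into the left side. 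This is a contraction/absorption step, not a Gronwall argument; your proposal is right in spirit but leaves the mechanism unspecified.

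\textbf{The Riccati step and control of $R$.} Here there is a real gap. Your direct inequality
\[
\frac{dS}{dt}\ge\frac{c'(\theta^*)}{8C_U}S^2-\frac{\|\gamma_1-\tfrac{h^2}{g}\|_{L^\infty}}{2}S-M_1
\]
requires a \emph{pointwise} bound on $R$ along the forward characteristic (to put $-\tfrac{c'}{4c}R^2$ and the linear $R$-term into $M_1$). The paper does not prove such a pointwise bound, and the $-\tfrac{c'}{4c}S^2$ source in the $R$-equation along backward characteristics makes it delicate. Instead the paper integrates the balance law $(S^2+R^2)_t+(c(S^2-R^2))_x=\cdots$ over the characteristic triangle to obtain only $\int_0^t R^2(\xi(s),s)\,ds=O(\varepsilon)$, which suffices (via Cauchy--Schwarz) to keep $\theta$ near $\theta^*$. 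For the Riccati step the paper introduces the integrating factor
\[
\tilde S=\exp\!\Big(\tfrac12\!\int_0^t(\gamma_1-\tfrac{h^2}{g})\,ds\Big)\,S,
\]
which removes the linear damping \emph{exactly}; dividing by $\tilde S^2$, integrating, and using the bootstrap $S>1$ reduces the $R$-contributions to $\int R^2\,dt=O(\varepsilon)$. Without this device your linear term $-\tfrac{1}{2}\|\gamma_1-\tfrac{h^2}{g}\|_{L^\infty}S$ cannot be beaten by the quadratic term uniformly in the coefficients after dividing by $S^2$, and the claimed blow-up time $T=\min\{2\ln 2/\|\gamma_1-\tfrac{h^2}{g}\|_{L^\infty},1\}$ does not follow.
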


In the proof of the theorem we will show that the singularity happens in the following form: there exists a time $0<t_0<T$ such that
\begin{align}
   \th_t\to \infty,\qquad&\th_x\to -\infty 
\end{align}
as $t\to t_0^-$ along the characteristic having the blowup phenomena.
%
%
%
%
In this example, the initial energy is small, and initial data are smooth, but finite time singularity still forms. The singularity is a cusp type of singularity, combining with the existence result in \cite{CLM}, also see \cites{CHL1,GHZ}.

The rest of the paper is divided into 3 sections as follows. In Section 2 we show the decay of the energy associated with a smooth solution. Section 3 contains the main estimate on the quantity $J$: we show the uniform bound of $J$ over a fixed time interval. In Section 4, we prove the singularity formation result in Theorem \ref{thm}.

\section{The energy of the system}
The energy of the system \eqref{sysf}-\eqref{sysw} is defined as
\begin{align}\label{Edef}
    \mathcal{E}(t):=\frac{1}{2}\int_\R\theta_t^2+c(\theta)^2\theta_x^2+u^2\,dx.
\end{align}
In this section we show the energy decay for smooth solutions.
\begin{prop}
For any smooth solution $(\theta(x,t),u(x,t))$ of the system \eqref{sysf}, \eqref{sysw}, the energy $\mathcal{E}(t)$ decays with the following rate
\begin{align}\label{engineq}
\begin{split}
\frac{d}{dt}\mathcal{E}(t)= -\int_{\R}\Big(b(\theta)u_x^2+\gamma_1\Big( \theta_t+\frac{h(\theta)}{\gamma_1}u_x\Big)^2\Big)\;dx,
\end{split}
\end{align}
where  $b(\theta):=g(\theta)-\frac{h^2(\theta)}{\gamma_1}>0.$
\end{prop}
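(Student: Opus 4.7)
The plan is to differentiate $\mathcal{E}(t)$ directly, replace the time derivatives of $\theta_t$ and $u$ using the equations \eqref{sysf} and \eqref{sysw}, integrate by parts to move spatial derivatives around, and finally recognize the resulting quadratic form in $(\theta_t,u_x)$ as the completed square appearing on the right-hand side of \eqref{engineq}.

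More concretely, I would first handle the wave part of the energy. Multiplying the wave equation \eqref{sysw} by $\theta_t$ and integrating in $x$ gives
\begin{equation*}
\int_{\R}\theta_t\theta_{tt}\,dx+\gamma_1\int_\R\theta_t^2\,dx=\int_\R c(\theta)\bigl(c(\theta)\theta_x\bigr)_x\theta_t\,dx-\int_\R h(\theta)\theta_t u_x\,dx.
\end{equation*}
The key identity is that the first term on the right, after writing $c(\theta)(c(\theta)\theta_x)_x\theta_t=\bigl(c^2\theta_t\theta_x\bigr)_x-\tfrac12(c^2\theta_x^2)_t$, equals $-\tfrac{d}{dt}\int_\R\tfrac12 c(\theta)^2\theta_x^2\,dx$ after integration in $x$ (boundary terms vanish by decay of the smooth solution). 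This yields
\begin{equation*}
\frac{d}{dt}\int_\R\tfrac12\bigl(\theta_t^2+c(\theta)^2\theta_x^2\bigr)\,dx=-\int_\R\bigl(\gamma_1\theta_t^2+h(\theta)\theta_tu_x\bigr)\,dx.
\end{equation*}

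Next I would take care of the $u$ part. Multiplying \eqref{sysf} by $u$ and integrating by parts gives
\begin{equation*}
\frac{d}{dt}\int_\R\tfrac12 u^2\,dx=-\int_\R\bigl(g(\theta)u_x^2+h(\theta)\theta_tu_x\bigr)\,dx.
\end{equation*}
Adding the two identities I obtain
\begin{equation*}
\frac{d}{dt}\mathcal{E}(t)=-\int_\R\bigl(\gamma_1\theta_t^2+2h(\theta)\theta_tu_x+g(\theta)u_x^2\bigr)\,dx,
\end{equation*}
and then complete the square in $\theta_t$: since $\gamma_1\bigl(\theta_t+\tfrac{h}{\gamma_1}u_x\bigr)^2=\gamma_1\theta_t^2+2h\theta_tu_x+\tfrac{h^2}{\gamma_1}u_x^2$, the remaining coefficient of $u_x^2$ is exactly $g-\tfrac{h^2}{\gamma_1}=b(\theta)$. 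The positivity $b(\theta)>0$ follows from \eqref{crucial}, which gives $\gamma_1 g-h^2>\overline{C}\,g>0$, so $b=(\gamma_1 g-h^2)/\gamma_1>0$.

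No step here is subtle; the only thing to be slightly careful about is the integration by parts that converts $c(c\theta_x)_x\theta_t$ into the time derivative of $\tfrac12 c^2\theta_x^2$, which requires the single algebraic identity above. Smoothness of the solution and sufficient decay at infinity (standard for the setup) justify dropping all boundary terms, so this is the mild obstacle worth noting rather than a genuine difficulty.
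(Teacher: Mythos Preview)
Your argument is correct and matches the paper's proof essentially line for line: multiply \eqref{sysf} by $u$ and \eqref{sysw} by $\theta_t$, integrate by parts, add, and complete the square. You even supply slightly more detail than the paper on the identity $c(c\theta_x)_x\theta_t=(c^2\theta_t\theta_x)_x-\tfrac12(c^2\theta_x^2)_t$ and on deducing $b(\theta)>0$ from \eqref{crucial}.
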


\begin{proof}
Multiplying \eqref{sysf} by $u$ and \eqref{sysw} by $\theta_t$, and integrating by parts, we have 
\beq\label{en1}
\frac{1}{2}\frac{d}{dt}\int u^2\,dx=-\int g(\theta)u_x^2\,dx-\int h(\theta)\theta_tu_x \,dx,
\eeq
and 
 \begin{align}\label{en2}\begin{split}
\frac{1}{2}\frac{d}{dt}\int \left(\theta_t^2+c^2(\theta)\theta_x^2\right)\,dx=&-\int \gamma_1\theta_t^2\,dx
-\int h(\theta)u_x\theta_t\,dx.
\end{split}
\end{align}
Sum up \eqref{en1} and \eqref{en2} to get
\begin{align*}
\frac{1}{2}\frac{d}{dt}\int \left(\theta_t^2+ c^2(\theta)\theta_x^2+ u^2\right)\,dx=&-\int \left(\gamma_1\theta_t^2+2h(\theta)\theta_tu_x+g(\theta)u_x^2\right) \,dx\\
=& -\int_{\R}\Big(b(\theta)u_x^2+\gamma_1\big(\theta_t+\frac{1}{\gamma_1}h(\theta)u_x\big)^2\Big)\,dx.
\end{align*}
\end{proof}

Therefore, considering only smooth solutions, for any $T>0$, we have 
\begin{align}\label{Ene}
    \max_{0\leq t\leq T}\mathcal{E}(t)\leq \mathcal{E}(0).
\end{align}

\section{Uniform bound on $J$ in finite time}
For convenience, we just fix $T_0=1$ and give a uniform upper bound on $J$, defined in \eqref{Jdef}, when $0\leq t\leq T_0$, when solution is smooth. In fact, one can choose $T_0$ to be any positive constant, and still prove that $J$ is bounded.

Before our calculation, we recall the bounds in \eqref{ghcbounds} and \eqref{crucial}, and also note that $|h'|$ and $|g'|$ are bounded above by a constant because of \eqref{fgh}.
\subsection{An integral relation on $J$ and its estimate}

To get the uniform $L^\infty$ estimate on $J$, we first find an integral relation on $J$ using both \eqref{sysf} and \eqref{sysw}. 

We study the potential of $J$, defined as 
\[A:=\int_{-\infty}^x u_x+\frac{h}{g}\theta_t\,dz=\int_{-\infty}^x J\,dx.\] 
So
\[
A_x=J.
\]

In terms of the quantity $J$ we can write the system as
\begin{align}
    u_t&=\big(g(\theta)u_x+h(\theta)\theta_t\big)_x,\\
    \th_{tt}+(\gamma_1-\frac{h^2}{g})\th_t&=c(\th)\big(c(\th)\th_x\big)_x-hJ.
\end{align}
And we derive the following equation for $A$.
\begin{align*}
    A_t&={\int_{-\infty}^x\frac{(gJ)_t}{g}\,dz}-{\int_{-\infty}^x\frac{g'\theta_t}{g}J\,dz}\\
    &={(gJ)_x+\int_{-\infty}^x\frac{1}{g}\bigg[[g'\theta_t-\gamma_1 g]J+[h'-\frac{g'h}{g}]\theta_t^2+[\gamma_1g-h^2]u_x+h\,c(c(\theta)\theta_x)_x\bigg]\,dz}\\
    &{-\int_{-\infty}^x\frac{g'\theta_t}{g}J\,dz}\\
    &=\big(g(\theta)J\big)_x+\int_{-\infty}^x\frac{1}{g}\bigg[[-\gamma_1]gJ+[h'-\frac{g'h}{g}]\theta_t^2+[\gamma_1g-h^2]u_x+h\,c(c(\th)\theta_x)_x\bigg]\,dz.
\end{align*} 
Integrating it by parts, we get
\begin{align}\nonumber
    A_t&=g(\theta)A_{xx}-\gamma_1A+g'\theta_xJ\\\nonumber
    &+\bigg[\int_{-\infty}^x[\frac{h'}{g}-\frac{g'h}{g^2}]\theta_t^2-[\gamma_1-\frac{h^2}{g}]'\theta_zu-(\frac{h(\theta)c(\theta)}{g})'c(\theta)\theta_z^2\,dz\bigg]\\
    &\quad+[\gamma_1-\frac{h^2}{g}]u+\frac{h(\theta)c^2(\theta)}{g}\theta_x.
\end{align}

In summary we consider the following Cauchy problem
\begin{align}\label{AAeq}
    A_t-g(\theta)A_{xx}+\gamma_1A= g'\theta_x J+F_1(\theta,u)+F_2(\theta,u),
\end{align}
where 
\begin{align*}
F_1&=\int_{-\infty}^x\left\{[\frac{h'}{g}-\frac{g'h}{g^2}]\theta_t^2-[\gamma_1-\frac{h^2}{g}]'\theta_z u-(\frac{h(\theta)c(\theta)}{g})'c(\theta)\theta_z^2\right\}\,dz,\\
   F_2 &=[\gamma_1-\frac{h^2}{g}] u+\frac{h(\theta)c^2(\theta)}{g}\theta_x,  
\end{align*}
with
\begin{align}
    A(x,0)=\int_\R u_0'+\frac{h(\theta_0)}{g(\theta_0)}\theta_1\,dx:=A_0(x).
\end{align}

It is easy to verify that, there exists some positive constant $C$, such that
\beq\label{Fest}
\|F_1(\theta,u)\|_{L^\infty(\mathbb R)}(t)\leq C\mathcal E(t)\leq C\mathcal E(0),\quad 
\|F_2(\theta,u)\|^2_{L^2(\mathbb R)}(t)\leq C\mathcal E(t)\leq C\mathcal E(0).
\eeq
In this section, without confusion, we always use $C$ to denote different positive constants for different estimates.

Now using the conclusion from Chapter 1, Theorem 12 in \cite{Fri}, we can formally solve $A$ by \eqref{AAeq} as
\begin{align}\label{Asol0}
    A(x,t)=&\int_\R\Gamma(x,t;\xi,0)A_0(\xi)\,d\xi\nonumber\\
    &+\int_0^t\int_\R\Gamma(x,t;\xi,\tau)\big(g'\theta_x J+F_1(\theta,u)+F_2(\theta,u)\big)(\xi,\tau)\,d\xi\,d\tau,
\end{align}
where the kernel $\Gamma$  can be written in terms of the heat kernel as follows,
 \begin{align}
     \Gamma(x,t,\xi,\tau)&=H^{\xi,\tau}(x-\xi,t-\tau)
+\int_\tau^t\int_\mathbb{R}H^{y,s}(x-y,t-s)\Phi(y,s;\xi,\tau)\,dy\,ds,
 \end{align}
where
\begin{align}
    H^{\xi,\tau}(x-\xi,t-\tau)=\frac{\sqrt{g(\theta(\xi,\tau))}}{2\sqrt{\pi}\sqrt{t-\tau}}e^{-\frac{g(\theta(\xi,\tau))(x-\xi)^2}{4(t-\tau)}}.
\end{align}
The function $\Phi$ is determined by the condition
\[\mathcal{L}\,\Gamma=0,\] 
where \[\mathcal{L}:=\partial_t-g(\theta)\partial_{xx}+\gamma_1.\]

It can be shown that such function exists and
\begin{align}\label{phiest}
|\Phi(y,s;\xi,\tau)|\leq \frac{C}{(s-\tau)^{5/4}}e^{\frac{-d(y-\xi)^2}{4(s-\tau)}},
\end{align}
where $d$ is a constant depending on $g.$ Moreover, we have the following bounds for $\Gamma$ and $\Gamma_x,$

\begin{align}\label{Gammaest}
    |\Gamma(x,t;\xi,\tau)|\leq \frac{C}{\sqrt{t-\tau}}e^{-\frac{d(x-\xi)^2}{4(t-\tau)}},
\end{align}
\begin{align}\label{Gamma_xest}
    |\Gamma_x(x,t;\xi,\tau)|\leq \frac{C}{{t-\tau}}e^{-\frac{d(x-\xi)^2}{4(t-\tau)}}.
\end{align}
The reader can find the proof of the above estimates in Chapter 1, Theorem 11 in \cite{Fri}.\\

Now, differentiating \eqref{Asol0} w.r.t $x$ we obtain that $J$ satisfies the following relation 
\begin{align}\label{Asol}
    J(x,t)&=\int_\R\Gamma_x(x,t;\xi,0)A_0(\xi)\,d\xi\nonumber\\
      &  +\int_0^t\int_\R\Gamma_x(x,t;\xi,\tau)F_1(\theta,u)(\xi,\tau)\,d\xi\,d\tau\nonumber\\
&        \int_0^t\int_\R\Gamma_x(x,t;\xi,\tau)F_2(\theta,u)(\xi,\tau)\,d\xi\,d\tau\nonumber\\
    &+\int_0^t\int_\R\Gamma_x(x,t;\xi,\tau)(g'\theta_\xi J)(\xi,\tau)\,d\xi\,d\tau\nonumber\\
    &:=L_0(x,t)+L_1(x,t)+L_2(x,t)+\int_0^t\int_\R\Gamma_x(x,t;\xi,\tau)(g'\theta_\xi J)(\xi,\tau)\,d\xi\,d\tau.
\end{align}

Then we use this expression to find the uniform upper bound on $J$. First, we give uniform bounds on $L_0$, $L_1$ and $L_2$ in terms of $J_0$, $A_0$ and the initial energy.  

\subsection{$L^\infty$ estimates on $L_1$ and $L_2$} 


Recall the estimates \eqref{Gamma_xest} and \eqref{Fest}. First,
\beq\label{L1inf}|L_1|\leq \|F_1\|_{L^\infty}\int_0^t\int_R\frac{C}{{t-\tau}}e^{-\frac{d(x-\xi)^2}{4(t-\tau)}}\,d\xi\,d\tau\leq Ct^{1/2}\|F_1\|_{L^\infty(\Omega_t)}\leq t^{\frac{1}{2}}C\mathcal E(0).\eeq
Secondly,
\begin{align}
 |L_2|\leq& \bigg[\int_0^t\int_R\frac{1}{{|t-\tau|}^{2-2r}}e^{-\frac{d(x-\xi)^2}{2(t-\tau)}}\,d\xi\,d\tau\bigg]^{1/2} \bigg[\int_0^t\int_R\frac{1}{{|t-\tau|}^{2r}}F_2^2\,d\xi\,d\tau\bigg]^{1/2}\nonumber\\
 &\leq \bigg[\int_0^t\frac{1}{{|t-\tau|}^{\frac{3}{2}-2r}}\,d\tau\bigg]^{1/2}\bigg[\int_0^t\frac{1}{{|t-\tau|}^{2r}}\,d\tau\bigg]^{1/2}\|F_2\|_{L^\infty((0,t),L^2(R))}.\label{L2inf0}
\end{align}
For $r=\frac{3}{8}$ we obtain:
\beq\label{L2inf}|L_2|\leq t^{\frac{1}{4}}C\|F^2\|_{L^\infty((0,t),L^2(\R))}
 \leq t^{1/4}C\mathcal E^\frac{1}{2}(0).\eeq

\subsection{$L^\infty$ estimates on $L_0$} 

\begin{lemma}\label{lemmainit}
Let $A_0(x)$ be such that $A_0(x),A_{0,x}(x)\in L^\infty(\R),$ we have
\begin{align}
    &\bigg|\int_\R\Gamma_x(x,t;\xi,0)A_0(\xi)\,d\xi\bigg|\\
    \leq& C\|J_{0}(x)\|_{L^\infty(\R)}+C_1\|\theta_0'(x)\|_{L^\infty(\R)}\|A_0(x)\|_{L^\infty(\R)}+C_2\|A_0(x)\|_{L^\infty(\R)},
\end{align}
for some constants $C,C_1$ and $C_2$.
\end{lemma}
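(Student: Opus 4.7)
My plan is as follows. The pointwise bound \eqref{Gamma_xest} yields only $|L_0(x,t)| \le C\|A_0\|_{L^\infty}/\sqrt{t}$, which diverges as $t\to 0^+$ and is therefore useless for producing a uniform-in-$t$ estimate. The remedy is to integrate by parts in the $\xi$ variable so as to shift the $x$-derivative off the kernel onto $A_0$, picking up the bounded quantity $A_0'=J_0$.

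First I would decompose $\Gamma = H + R$ according to Friedman's parametrix, where the principal part is
\[
H(x,t;\xi,0) = \frac{\sqrt{a(\xi)}}{2\sqrt{\pi t}}\exp\!\Big(-\frac{a(\xi)(x-\xi)^2}{4t}\Big), \qquad a(\xi) := g(\theta_0(\xi)),
\]
and $R$ is the Volterra correction generated by $\Phi$. A direct differentiation that tracks both the Gaussian argument $x-\xi$ and the $\xi$-dependence of the coefficient $a(\xi)$ yields the pointwise identity
\[
H_x(x,t;\xi,0) + H_\xi(x,t;\xi,0) \;=\; H(x,t;\xi,0)\,\cdot\,g'(\theta_0(\xi))\,\theta_0'(\xi)\cdot \Big[\frac{1}{2a(\xi)} - \frac{(x-\xi)^2}{4t}\Big].
\]
Integrating $\int_{\R} H_x A_0\,d\xi$ by parts in $\xi$ (the boundary terms vanish by the Gaussian decay of $H$ tested against the bounded $A_0$) then produces
\[
\int_{\R} H_x A_0\,d\xi \;=\; \int_{\R} H\,J_0\,d\xi \;+\; \int_{\R} H \cdot g'(\theta_0)\,\theta_0'(\xi) \cdot \Big[\frac{1}{2a(\xi)} - \frac{(x-\xi)^2}{4t}\Big] A_0\,d\xi.
\]
The first term is bounded by $C\|J_0\|_{L^\infty}$ via the uniform Gaussian mass $\int_{\R} H\,d\xi \le C$, and the second by $C_1\|\theta_0'\|_{L^\infty}\|A_0\|_{L^\infty}$ using $|g'|\le C$ together with the zeroth- and second-moment Gaussian identities $\int_{\R} H\,\tfrac{(x-\xi)^2}{4t}\,d\xi \le C$.

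For the remainder piece $\int_{\R} R_x A_0\,d\xi$, I would exploit the fact that $R_x = \Gamma_x - H_x$ is less singular in $t$ than $\Gamma_x$ alone: Lipschitz (indeed $C^1$) regularity of $a(\xi) = g(\theta_0(\xi))$ propagates through the Volterra series for $\Phi$ and provides an estimate of the form $|R_x(x,t;\xi,0)| \le C\,t^{-1/2+\alpha/2}\, e^{-d(x-\xi)^2/(4t)}$ for some $\alpha>0$, so that $\int_{\R} |R_x|\,d\xi \le C\,t^{\alpha/2}$ is uniformly bounded on $(0,T_0]$. This piece therefore contributes at most $C_2\|A_0\|_{L^\infty}$, and summing the three contributions gives the lemma.

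The main obstacle will be the remainder step: the crude pointwise bound from \eqref{Gamma_xest} alone is too weak (a naive Fubini combining \eqref{phiest} with the derivative bound on $H$ leaves a residual $t^{-1/4}$ factor), and one must extract the improved decay of $R_x$ from the parametrix construction, which requires tracking how the H\"older regularity of $a(\xi) = g(\theta_0(\xi))$ propagates through the Volterra equation satisfied by $\Phi$. The principal-part calculation, by contrast, is essentially the argument one would carry out for the constant-coefficient heat kernel, corrected by an error arising from the $\xi$-variation of $a$.
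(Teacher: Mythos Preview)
Your handling of the principal part $H$ is exactly the paper's argument: write $H_x^{\xi,0}=-H_\xi^{\xi,0}+(\text{terms with }g'\theta_0')$, integrate the $-H_\xi$ piece by parts onto $A_{0,\xi}=J_0$, and bound the two $\theta_0'$-correction terms by Gaussian zeroth and second moments. This produces the first two constants $C,C_1$ in the same way.

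For the Volterra remainder the paper does \emph{not} invoke any improved decay of $R_x$; it applies the crude bounds directly, integrating first in $\xi$ (gaining $s^{1/2}$ against the $\Phi$ estimate), then in $y$, and arrives at $\int_0^t (t-s)^{-1/2}s^{-3/4}\,ds$, which it declares $\le C_2$. You are right to flag this: with the quoted exponent $5/4$ on $\Phi$ (corresponding to H\"older-$\tfrac12$ coefficients) that integral equals $B(\tfrac12,\tfrac14)\,t^{-1/4}$ and is not uniformly bounded as $t\to 0^+$. Your proposed remedy---feed the Lipschitz (indeed $C^1$) regularity of the coefficient $g\circ\theta$, available here since the solution is classical, into Friedman's construction to sharpen the $\Phi$ bound to $|\Phi|\le Cs^{-1}e^{-d(y-\xi)^2/(4s)}$---is the correct fix: the time integral then becomes $\int_0^t (t-s)^{-1/2}s^{-1/2}\,ds=\pi$, uniformly in $t$. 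So the paper's direct computation works once the H\"older exponent in \eqref{phiest} is taken as $\alpha=1$ rather than $\alpha=\tfrac12$; your route and the paper's coincide after that correction.

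One small adjustment: the parametrix estimate actually yields $|R_x(x,t;\xi,0)|\le C\,t^{-1+\alpha/2}\,e^{-d(x-\xi)^2/(4t)}$, not $t^{-1/2+\alpha/2}$; integrating in $\xi$ gives $\int_{\R}|R_x|\,d\xi\le C\,t^{(\alpha-1)/2}$. This is uniformly bounded exactly when $\alpha\ge 1$, not for an arbitrary $\alpha>0$, so you genuinely need the full Lipschitz regularity rather than ``some positive H\"older exponent,'' and the resulting contribution is $O(1)$ rather than $O(t^{\alpha/2})$.
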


\begin{proof}
Recall: 
\begin{align}\label{Gamma_x}
\Gamma_x(x,t;\xi,0)=H_x^{\xi,0}(x-\xi,t)+\int_0^t\int_\mathbb{R}H_x^{y,s}(x-y,t-s)\Phi(y,s;\xi,0)\,dy\,ds
\end{align}
and
\begin{align}
    H^{\xi,\tau}(x-\xi,t-\tau)=\frac{\sqrt{g(\theta(\xi,\tau))}}{2\sqrt{\pi}\sqrt{t-\tau}}e^{-\frac{g(\theta(\xi,\tau))(x-\xi)^2}{4(t-\tau)}}.
\end{align}
Also note that due to the dependence of $H^{\xi,0}(x-\xi,t)$ on $g(\th(\xi,0))$ we have the following relation
\begin{align}\nonumber
    H_x^{\xi,0}(x-\xi,t)&=-H_\xi^{\xi,0}(x-\xi,t)+\frac{g'\theta_0'}{4\sqrt{\pi}\sqrt{g}\sqrt{t}}e^{-g(\theta_0(\xi)(x-\xi)^2/4t}\\\nonumber
    &-\frac{\sqrt{g}g'\theta_0'(x-\xi)^2}{\sqrt{\pi}8t^{3/2}}e^{-g(\theta_0(\xi)(x-\xi)^2/4t}\\\nonumber
    &=-H_\xi^{\xi,0}(x-\xi,t)+\frac{g'\theta_0'}{2\sqrt{\pi}}H^{\xi,0}(x-\xi,t)\\\label{init}
    &-\frac{\sqrt{g}g'\theta_0'(x-\xi)^2}{\sqrt{\pi}8t^{3/2}}e^{-g(\theta_0(\xi)(x-\xi)^2/4t}.
\end{align}

Starting with the first term of $\Gamma_x$ and using the above relation, there are three integrals to estimate. The integral related to the first term in \eqref{init} is
\begin{align}
\int_\R -H_\xi^{\xi,0}(x-\xi,t)A_0(\xi)\,d\xi=\int_\R H^{\xi,0}(x-\xi,t)A_{0,\xi}(\xi)\,d\xi.
\end{align}
We have
\begin{align}\nonumber
\bigg|\int_\R -H_\xi^{\xi,0}(x-\xi,t)A_0(\xi)\,d\xi\bigg|&\leq\int_\R H^{\xi,0}(x-\xi,t)|A_{0,\xi}(\xi)|\,d\xi\\\nonumber
&\leq C \|A_{0,x}(x)\|_{L^\infty(\R)}\\
&=C\|J_{0}(x)\|_{L^\infty(\R)}.
\end{align}
The integral related to the second term of \eqref{init} can be easily estimated as
\begin{align}
   \big|\int_\R \frac{g'\theta_0'}{2\sqrt{\pi}}H^{\xi,0}(x-\xi,t) A_0(\xi)\,d\xi\big|\leq C_1\|\th'_0(x)\|_{L^\infty(\R)} \|A_0(x)\|_{L^\infty(\R)}.
\end{align}
The integral related to the third term of \eqref{init} has a similar bound as the second term, using the change of variable $$u=\frac{x-\xi}{\sqrt{t}}.$$
Combing above estimates, we obtain the following estimate 
\begin{align}\label{3.25}
    \bigg|\int_\R H_x^{\xi,0}(x,t;\xi,0)A_0(\xi)\,d\xi\bigg|\leq C \|J_{0}(x)\|_{L^\infty(\R)}+C_1\|\theta_0'(x)\|_{L^\infty(\R)}\|A_0(x)\|_{L^\infty(\R)}.
\end{align}
For the term with $\Phi$ (the second term in \eqref{Gamma_x}) we have:
\begin{align*}
&\bigg|\int_\R\int_0^t\int_\mathbb{R}H_x^{y,s}(x-y,t-s)\Phi(y,s;\xi,0)\,dy\,ds\,A_0(\xi)\,dy\,ds\,d\xi\bigg|\\
\leq& \|A_0(x)\|_{L^\infty}\int_\R\int_0^t\int_\R \frac{|x-y|}{(t-s)^{3/2}}e^{-g_L\frac{(x-y)^2}{4(t-s)}} \frac{1}{s^{5/4}}e^{-d\frac{(y-\xi)^2}{4s}}\,dy\,ds\,d\xi\\
\leq& C\,\|A_0(x)\|_{L^\infty}\int_0^t\int_\R \frac{|x-y|}{(t-s)^{3/2}}e^{-g_L\frac{(x-y)^2}{4(t-s)}} \frac{1}{s^{3/4}}\,dy\,ds\\
\leq &C\,\|A_0(x)\|_{L^\infty}\int_0^t \frac{1}{(t-s)^{1/2}} \frac{1}{s^{3/4}}\,ds\leq C_2 \|A_0(x)\|_{L^\infty(\R)}.
\end{align*}
Combining this estimate and \eqref{3.25}, we prove the lemma.
\end{proof}

\subsection{Uniform bound on $J$}

 Similarly as in \eqref{L2inf0}, we have 
\begin{align}
&| \int_0^t\int_\R\Gamma_x(x,t;\xi,\tau)(g'\theta_x J)(\xi,\tau)\,d\xi\,d\tau|\nonumber\\
\leq &
C t^{1/4}\|J\|_{L^\infty((0,t),L^\infty(\R))}\|\theta_x\|_{L^\infty((0,t),L^2(\R))}]\nonumber\\
 \leq &C t^{1/4}\|J\|_{L^\infty((0,t),L^\infty(\R))}\mathcal E^\frac{1}{2}(0),\label{E0J}
\end{align}
where we use the fact that $g'$ is uniformly bounded, \eqref{Edef}, \eqref{ghcbounds} and \eqref{Ene}.

Suppose that 
\beq\label{ce12}
C{\mathcal E}^\frac{1}{2}(0)\leq\frac{1}{2}
\eeq
 for the constant $C$ in \eqref{E0J}, then we have 
\beq\label{L3inf}
| \int_0^t\int_\R\Gamma_x(x,t;\xi,\tau)(g'\theta_x J)(\xi,\tau)\,d\xi\,d\tau|
\leq \frac{1}{2}\|J\|_{L^\infty((0,1),L^\infty(\R))},
\eeq
for any $t\in[0,T_0]$ with $T_0=1$.

Hence by \eqref{Asol}, \eqref{L1inf}, \eqref{L2inf}, \eqref{L3inf} and Lemma \ref{lemmainit}, we have 
\begin{align}\nonumber
\|J\|_{L^\infty((0,1),L^\infty(\R))}&\leq 2\big[C\mathcal E(0)+C\mathcal E^{\frac{1}{2}}(0)+C\|J_{0}(x)\|_{L^\infty(\R)} \\
&\ \ \ +C_1\|\theta_0'(x)\|_{L^\infty(\R)}\|A_0(x)\|_{L^\infty(\R)}+C_2\|A_0(x)\|_{L^\infty(\R)}\big].
\label{key0}
\end{align}

\section{Singularity formation for classical solutions:}
In this section we prove Theorem \ref{thm}. 
For reader's convenience, we recall the system
\begin{align}\label{u}
u_t&=\big(g(\theta)u_x+h(\theta)\theta_t\big)_x,\\ \label{Theta}
\theta_{tt}+\gamma_1\theta_t&=c(\theta)(c(\theta)\theta_x)_x-h(\theta)u_x,
\end{align}
and the $C^1$ initial data
\begin{align}
\theta(x,0)=\theta_0(x)&=\theta^*+\epsilon\phi(\frac{x}{\epsilon}),\\ 
\theta_t(x,0)=\theta_1(x)&
=(-c(\theta_0(x))+\epsilon)\phi'(\frac{x}{\epsilon}),
\end{align}
\[u(x,0)=u_0(x)= \begin{cases} 
      0, & x\in(-\infty,-\epsilon) \\
      \int_{-\epsilon}^{x}\frac{h}{g}c(\theta_0(a))\theta'_0(a)\,da, &  x\in[-\epsilon,\epsilon] \\
      \chi(x), & x\in(\epsilon,\epsilon+2)\\
      0, & x\in(\epsilon+2,\infty)
   \end{cases}
\]
where $\chi(x)$ is a $C^1$ function satisfying
\begin{equation}
|\chi'(x)|\leq \frac{3}{2}\|\frac{h}{g}\|_{L^\infty} C_UC_2\,\epsilon,
\end{equation}
and the $C^1$ function $\phi$ satisfies 
\begin{align}
    \phi(0)=0\,\,\,\,\,\,\,\,\text{and}\,\,\,\,\,\,\, \phi(a)=0\,\,\,\text{for}\,\,\,\, a\notin(-1,1),
\end{align}
\begin{align}\label{max}
-\phi'(0)>\max\{\frac{16C_U\|\gamma_1-\frac{h^2}{g}\|_{L^\infty}}{ c'(\theta^*)C_L\ln 2},\frac{\exp\big({\|{\gamma_1-\frac{h^2}{g}\|_{L^\infty}}}\big)}{C_L}\}\,\,\,\text{and}\,\,\,\,|\phi'(x)|\leq C_2,
\end{align}
and
\begin{equation}
\int_{-1}^{1}(\phi')^2(a)\,da<k_0.
\end{equation}
Here $\theta^*$ is a constant such that $c'(\theta^*)>0$ and $k_0$ is some constant. 

\begin{remark}
A choice of the function $\chi(x)$ can be a cubic polynomial constructed by satisfying the following constraints,
\begin{align*}
&\chi(\epsilon)=\int_{-\epsilon}^{\epsilon}\frac{h}{g}c(\theta_0(a))\theta_0'(a)\,da,\\
&\chi'(\epsilon)=\frac{h}{g}c(\theta_0(\epsilon))\theta_0'(\epsilon)=\frac{h}{g}c(\theta_0(\epsilon))\phi'(1)=0,\\
&\chi(\epsilon+2)=0,\\
&\chi'(\epsilon+2)=0.
\end{align*}
Some calculations lead to 
\begin{align*}
\chi(x)&=\big(\frac{1}{4}\int_{-\epsilon}^\epsilon \frac{h}{g} c\phi'\,da\big)x^3-\big(\frac{3}{4}(\epsilon+1)\int_{-\epsilon}^\epsilon \frac{h}{g}c\phi'\,da\big)x^2+\big(\frac{3}{4}\epsilon(\epsilon+2)\int_{-\epsilon}^\epsilon\frac{h}{g} c\phi'\,da\big)x\\
&-\big(\frac{1}{4}(\epsilon-1)(\epsilon+2)^2\int_{-\epsilon}^\epsilon \frac{h}{g} c\phi'\,da\big).
\end{align*}
The derivative $\chi'(x)$ is a quadratic polynomial that vanishes at the end points $x=\epsilon$ and $x=\epsilon+2.$ The parabola is either concave or convex depending on the sign of the integral $\int_{-\epsilon}^\epsilon c\ \phi'\,da.$ In any case,
the critical point of the derivative is $$\big(\epsilon+1,\chi'(\epsilon+1)\big)=\big(\epsilon+1,-\frac{3}{4}\int_{-\epsilon}^\epsilon\frac{h}{g} c\ \phi'\,da\big).$$ This means
\[|\chi'(x)|\leq\frac{3}{2}\|\frac{h}{g}\|_{L^\infty} C_UC_2\,\ve.\]
\end{remark}

Now we define the following gradient variables representing the rate of change of $\th$ along the forward and backward characteristics:
$$S:=\theta_t-c(\theta)\theta_x,$$ $$R:=\theta_t+ c(\theta)\theta_{x}.$$
Direct calculations give:
\begin{align}\label{Sf}
S_t+c(\th)S_x=\frac{c'}{4c}(S^2-R^2)+\frac{1}{2}(\frac{h^2}{g}-\gamma_1)(R+S)-hJ,
\end{align}
\begin{align}\label{Rb}
R_t-c(\th)R_x=\frac{-c'}{4c}(S^2-R^2)+\frac{1}{2}(\frac{h^2}{g}-\gamma_1)(R+S)-hJ,
\end{align}
and the following balance laws,
\begin{align}
(S^2)_t+(c(\th)S^2)_x=\frac{c'}{2c}(S^2R-SR^2)+(\frac{h^2}{g}-\gamma_1)S(R+S)-2hSJ,
\end{align}
\begin{align}
(R^2)_t-(c(\th)R^2)_x=\frac{-c'}{2c}(S^2R-SR^2)+(\frac{h^2}{g}-\gamma_1)R(R+S)-2hRJ. 
\end{align}
So we can get the following equation
\begin{align}\label{div}
   \big(S^2+R^2\big)_t+\big(c(\th)(S^2-R^2)\big)_x=(\frac{h^2}{g}-\gamma_1)(R+S)^2-2hJ(R+S),
\end{align}
with the initial conditions
\begin{align}
R(x,0)=\ve \phi'(\frac{x}{\ve}),\,\,\,\,\,\,\, S(x,0)=(-2c(\theta_0(x))+\ve)\phi'(\frac{x}{\ve}).
\end{align}

By \eqref{max} we have
\begin{align}\nonumber
S(0,0)&=(-2c(\theta^*)+\epsilon)\phi'(0)\\\nonumber
&=(2c(\theta^*)-\epsilon)(-\phi'(0))\\
&>\max\left\{\frac{16C_U\|\gamma_1-\frac{h^2}{g}\|_{L^\infty}}{ c'(\theta*)\ln 2},\exp\big(\|\gamma_1-\frac{h^2}{g}\|_{L^\infty}\big)\right\}.\label{lbound}
\end{align}
Since
\begin{align*}
    R^2(x,0)+S^2(x,0)=(-c(\th_0)+\epsilon)^2\phi'(\frac{x}{\epsilon})^2+c^2(\th_0)\phi'(\frac{x}{\epsilon})^2\leq C\,\phi'(\frac{x}{\epsilon})^2,
\end{align*}
for some constant $C$, it is easy to get that
\[\int_{\-\infty}^{\infty}R^2(x,0)+S^2(x,0)\,dx=O(\epsilon).\]
Similarly,
\[\int_{-\infty}^{\infty}u^2(x,0)\,dx=O(\epsilon^2).\]

Under the help of energy decay we obtain
\begin{align}
    \mathcal{E}(t)\leq \mathcal{E}(0)=\frac{1}{2}\int_{-\infty}^{\infty}(R^2+S^2+2u^2)(x,0)\,dx=O(\epsilon).\label{4.16}
\end{align}

Hence, by \eqref{key0}, it is easy to get that 
\beq\label{est1}
\|J\|_{L^\infty((0,1),L^\infty(\R))}= O(\epsilon^\frac{1}{2}),
\eeq
where we use \eqref{key0}, \eqref{4.16} and 
$$|A_0|\leq\int_\R|J_0|\,dx=\int_{-\epsilon}^{\epsilon}\epsilon\frac{h}{g}\phi'(x/\epsilon)\,dx+\int_\epsilon^{\epsilon+2}\chi'\,dx\approx \epsilon^2+\epsilon=O(\epsilon).$$
Furthermore, $\epsilon$ is small enough such that \eqref{ce12} is satisfied.

\begin{figure}[h!]
\begin{center}
\includegraphics[width=10cm, height=10cm]{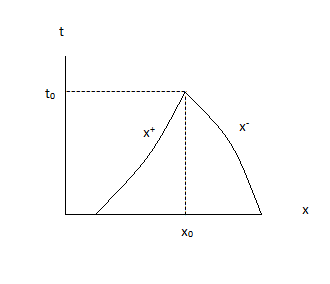})
\end{center}
\vspace{-2cm}
\caption{The triangle $\Omega.$}
\label{fig1}
\end{figure}

Now, we consider the two characteristic curves $x_{\pm}(t)$ given by
\begin{equation*}
\frac{dx_{\pm}}{dt}=\pm c(\theta)
\end{equation*}
with $x_1:=x_{+}(0)$ and $x_2:=x_{-}(0).$ See figure \ref{fig1}. Since the wave speed $c$ has a positive lower bound, two characteristics intersect at some point, say $(x_0,t_0),$ so we have $x_+(t_0)=x_-(t_0)=x_0$ and
\beq\label{4.17}|x_2-x_1|\leq|x_2-x_0|+|x_0-x_1|\leq 2C_Ut_0.\eeq

We assume that $t_0\leq 1$. We will verify it later by showing that blowup will happen before $t=1$.

Integrating \eqref{div} over the triangle $\Omega$ (see figure \ref{fig1}), and applying the divergence theorem, then we have
\begin{align}\nonumber
    \int_{x_0}^{x_1}2R^2(x,t_{+}(x))\,dx&+
    \int_{x_2}^{x_0}2S^2(x,t_{-}(x))\,dx-
    \int_{x_1}^{x_2}(R^2(x,0)+S^2(x,0))\,dx\\
&=\int\int_{\Omega}(\frac{h^2}{g}-\gamma_1)(S+R)^2\,dx\,dt
-\int\int_{\Omega}\frac{h}{g}\,J\,(S+R)\,dx\,dt.
\end{align}
Rearranging it, we have
\begin{align}\nonumber
    &\int_{x_1}^{x_0}R^2(x,t_{+}(x))\,dx+
    \int_{x_0}^{x_2}S^2(x,t_{-}(x))\,dx\\
    \leq& \frac{1}{2} \int_{x_1}^{x_2}(R^2+S^2)(x,0)\,dx\nonumber
+\iint_{\Omega}\frac{-1}{2}(\frac{h^2}{g}-\gamma_1)(S+R)^2
+h\,|J|\,(|S|+|R|)\,dx\,dt.\label{div1}
\end{align}
By the decay of energy and $t_0\leq 1$, we obtain 
\begin{align}
\iint_{\Omega}\frac{-1}{2}(\frac{h^2}{g}-\gamma_1)(S+R)^2\leq t_0\,\|\gamma_1-\frac{h^2}{g}\|_{L^\infty(\Omega)}\mathcal E(0)=O(\epsilon).
\end{align}
Similarly, by the decay of energy, $t_0\leq 1$, and also using \eqref{est1} and \eqref{4.17}, we have
\[
\iint_{\Omega}h\,|J|\,(|S|+|R|)\,dx\,dt\leq O(\epsilon).
\]
So we have 
\begin{align}\nonumber
 \int_{x_1}^{x_0}R^2(x,t_{+}(x))\,dx+
    \int_{x_0}^{x_2}S^2(x,t_{-}(x))\,dx
    \leq 
    O(\epsilon).
\end{align}

Next we consider a forward characteristic that we denote by $$x=\xi(t)$$ for $t\in[0,\min\{1, \frac{2\ln2}{\|\gamma_1-\frac{h^2}{g}\|_{L^\infty}}\}]$, such that $$\frac{d\xi(t)}{dt}=c(\theta(\xi(t),t)),$$ $$\xi(0)=0.$$  Integrating the equation $$\frac{d\theta(\xi(t),t)}{dt}= R(\xi(t),t),$$
 we obtain
\begin{align*}
|\theta(\xi(t),t)-\theta(\xi(0),0)|&=|\int_{0}^{t} R(\xi(s),s)\,ds|\\
&\leq \sqrt{t}\bigg(\int_{0}^{t}R^2(\xi(s),s)\,dt\bigg)^{1/2}\\
&=O(\sqrt{\epsilon}).
\end{align*}  
Using the smoothness of $c$, when $\epsilon$ is small enough, $$c'(\theta(\xi(t),t))>\frac{c'(\theta(\xi(0),0))}{2}=\frac{c'(\theta^*)}{2}>0.$$

Next, we claim that for smooth solutions we have $S(\xi(t),t)>1$ as long as $t\in\big[0,\min\{1,\frac{2\ln2}{\|\gamma_1-\frac{h^2}{g}\|_{L^\infty}}\}\big].$ To prove it by an contradiction argument, we assume that $S(\xi(t),t)\leq 1$ for some time in the interval. Define  \[t^*:=\inf\left\{t\in\big(0,\min\{1,\frac{2\ln2}{\|\gamma_1-\frac{h^2}{g}\|_{L^\infty}}\}\big]:S(t,\xi(t))\leq 1\right\}.\] By the continuity of $S$ we have
\[S(\xi(t^*),t^*)=1.\]

Define 
\begin{equation*}
\Tilde{S}=e^{p(t,x)}S(x,t):=\exp\big(\int_0^t\frac{1}{2}(\gamma_1-\frac{h^2}{g})(x,s)\,ds\big)S(x,t).
\end{equation*}
Some calculations give
\begin{align*}
    \Tilde{S}_t+c(\th)\Tilde{S}_x&=p_t(t,x)\Tilde{S}+e^{p(t,x)}S_t+c(\th)p_x(t,x)\Tilde{S}+c(\th)e^{p(t,x)}S_x\\
    &=(p_t+cp_x)\Tilde{S}+e^{p(t,x)}(S_t+cS_x)\\
    &=(p_t+cp_x)\Tilde{S}+e^{p(t,x)}\left[\frac{c'}{4c}(S^2-R^2)+\frac{1}{2}(\frac{h^2}{g}-\gamma_1)(R+S)-hJ\right]\\
    &=(p_t+cp_x)\Tilde{S}+\frac{c'}{4c}e^{-p(t,x)}\Tilde{S}^2+\frac{1}{2}(\frac{h^2}{g}-\gamma_1)\Tilde{S}-\frac{c'}{4c}e^{p(t,x)}R^2\\
    &+\frac{1}{2}(\frac{h^2}{g}-\gamma_1)e^{p(t,x)}R-e^{p(t,x)}hJ\\
    &=\big(p_t+cp_x+\frac{1}{2}(\frac{h^2}{g}-\gamma_1)\big)\Tilde{S}+\frac{c'}{4c}e^{-p(t,x)}\Tilde{S}^2-\frac{c'}{4c}e^{p(t,x)}R^2\\
    &+\frac{1}{2}(\frac{h^2}{g}-\gamma_1)e^{p(t,x)}R-e^{p(t,x)}hJ.
\end{align*}
This means
along the curve $\xi(t)$ for $t\in [0,t^*]$
\begin{align}\nonumber
\frac{d}{dt}\Tilde{S}(\xi(t),t)&=\big(\frac{d}{dt}p(t,\xi(t))
+\frac{1}{2}(\frac{h^2}{g}-\gamma_1)\big)\Tilde{S}
+\frac{c'}{4c}e^{-p(t,\xi(t))}\Tilde{S}^2-\frac{c'}{4c}e^{p(t,\xi(t))}R^2\\\nonumber
&+\frac{1}{2}(\frac{h^2}{g}-\gamma_1)e^{p(t,\xi(t))}R-e^{p(t,\xi(t))}hJ\\
&=\frac{c'}{4c}e^{-p(t,\xi(t))}\Tilde{S}^2-\frac{c'}{4c}e^{p(t,\xi(t))}R^2
+\frac{1}{2}(\frac{h^2}{g}-\gamma_1)e^{p(t,\xi(t))}R-e^{p(t,\xi(t))}hJ.
\end{align}
Dividing it by $\Tilde{S}^2$ then integrating it over $[0,t^*]$, we have
\begin{align*}
    \frac{1}{\Tilde{S}(0)}-\frac{1}{\Tilde{S}(t^*)}&\geq \int_0^{t^*}\frac{c'}{4c}e^{-p(t,\xi)}\,dt\\
    &+\int_0^{t^*}\frac{1}{\Tilde{S}^2}\left[-\frac{c'}{4c}e^{p(t,\xi)}R^2+\frac{1}{2}(\frac{h^2}{g}-\gamma_1)e^{p(t,\xi)}|R|-e^{p(t,\xi)}h|J|\right]\,dt.
\end{align*}
Recalling $0< t^*\leq \frac{2\ln 2}{\|\gamma_1-\frac{h^2}{g}\|_{L^\infty}},$ we have 
\begin{align}\nonumber
    \frac{1}{\Tilde{S}(t^*)}&\leq\frac{1}{\Tilde{S}(0)}-\int_0^{t^*}\frac{c'}{4c}e^{-p(t,\xi(t))}\,dt\\\nonumber
    &\quad+\int_0^{t^*}\frac{1}{\Tilde{S}^2}\left[\frac{c'}{4c}e^{p(t,\xi(t))}R^2+\frac{1}{2}(-\frac{h^2}{g}+\gamma_1)e^{p(t,\xi(t))}|R|
    +e^{p(t,\xi(t))}h|J|\right]\,dt\\\nonumber
    &\leq\min\left\{\frac{c'(\th^*)\ln 2}{16C_U\|\gamma_1-\frac{h^2}{g}\|_{L^\infty}},\exp\big(-\|\gamma_1-\frac{h^2}{g}\|_{L^\infty}\big),\,\frac{c'(\th^*)}{32C_U}\right\} -\frac{c'(\th^*)}{16C_U}t^*+D\sqrt{\epsilon}\\
    &\leq\exp\big(-\|\gamma_1-\frac{h^2}{g}\|_{L^\infty}\big)+D\sqrt{\epsilon},
\end{align}
for some positive constant $D$.

For $\ve$ small enough,
\[\frac{1}{\Tilde{S}(t^*)}<\exp\big(\frac{-1}{2}\|\gamma_1-\frac{h^2}{g}\|_{L^\infty}\big),\]
then
\[\Tilde{S}(t^*)>\exp\big(\frac{1}{2}\|\gamma_1-\frac{h^2}{g}\|_{L^\infty}\big).\]
Hence
\[S(t^*)>\exp\big(\frac{1}{2}\|\gamma_1-\frac{h^2}{g}\|_{L^\infty}\big)\exp\big(\int_0^{t^*}\frac{-1}{2}\big(\gamma_1-\frac{h^2}{g}\big)\,ds\big)\geq\exp\big(\frac{1}{2}\|\gamma_1-\frac{h^2}{g}\|_{L^\infty}(1-t^*)\big),\]
which gives $S(t^*)>1.$ This is a contradiction with the definition of $t^*.$\\

This proves that  $S(\xi(t),t)>1$ as long as $t\in\big[0,\min\{1,\frac{2\ln2}{\|\gamma_1-\frac{h^2}{g}\|_{L^\infty}}\}\big].$

Now using same calculations, we obtain
\[\frac{1}{\Tilde{S}(t)}\leq\min\left\{\frac{c'(\th^*)\ln 2}{16C_U\|\gamma_1-\frac{h^2}{g}\|_{L^\infty}},\frac{c'(\th^*)}{32C_U}\right\}-\frac{c'(\th^*)}{16C_U}t+D\sqrt{\epsilon}.\]
As a consequence,   $\tilde S(t)$ blows up before
\[t=\min\left\{\frac{\ln 2}{\|\gamma_1-\frac{h^2}{g}\|_{L^\infty}},\frac{1}{2}\right\}
+\frac{16C_UD}{c'(\th^*)}\sqrt{\epsilon}.\]

Now choose $\epsilon$ small enough, we know the solution will blowup before $$T=\min\left\{\frac{2\ln2}{\|\gamma_1-\frac{h^2}{g}\|_{L^\infty}},1\right\}.$$ More precisely, there exists a time $t_0<T$ such that \[S(t,\xi(t))\to\infty\] as ${t\to t_0^{-}}.$ 
This shows that $\th_t\to \infty$ or $\th_x\to -\infty$ as ${t\to t_0^{-}}.$

On the other hand, because of the smallness of $R$ initially, i.e. $R(x,0)$ is of order $O(\epsilon),$ and \eqref{Rb}, we know that $R(x,t)$ remains uniformly bounded before the blowup of $S(x,t).$ This shows that both $$\th_t\to \infty,\hbox{ and \ } \th_x\to -\infty$$ simultaneously as ${t\to t_0^{-}}$, at the blowup point.

This completes the proof of Theorem \ref{thm}.


\section*{Acknowledgments}
The authors are partially supported by NSF grant DMS-2008504. This paper is motivated by a discussion with Weishi Liu. The authors thank Weishi Liu for the helpful comments.

\section*{Conflict of interest statement}

On behalf of all authors, the corresponding author states that there is no conflict of interest.

\end{document}